\begin{document}
\newtheorem{thm}{Theorem}[section]
\newtheorem*{thm*}{Theorem}
\newtheorem{lem}[thm]{Lemma}
\newtheorem{prop}[thm]{Proposition}
\newtheorem{cor}[thm]{Corollary}
\newtheorem*{conj}{Conjecture}
\newtheorem{proj}[thm]{Project}
\newtheorem{question}[thm]{Question}
\newtheorem{rem}{Remark}[section]
\newtheorem{notation}[thm]{Notation}
\newtheorem{proposition}[thm]{Proposition}

\theoremstyle{definition}
\newtheorem*{defn}{Definition}
\newtheorem*{remark}{Remark}
\newtheorem{exercise}{Exercise}
\newtheorem*{exercise*}{Exercise}

\numberwithin{equation}{section}

\newcommand{\rad}{\operatorname{rad}}

\newcommand{\Z}{{\mathbb Z}} 
\newcommand{\Q}{{\mathbb Q}}
\newcommand{\R}{{\mathbb R}}
\newcommand{\C}{{\mathbb C}}
\newcommand{\N}{{\mathbb N}}
\newcommand{\FF}{{\mathbb F}}
\newcommand{\fq}{\mathbb{F}_q}
\newcommand{\rmk}[1]{\footnote{{\bf Comment:} #1}}

\renewcommand{\mod}{\;\operatorname{mod}}
\newcommand{\ord}{\operatorname{ord}}
\newcommand{\TT}{\mathbb{T}}
\renewcommand{\i}{{\mathrm{i}}}
\renewcommand{\d}{{\mathrm{d}}}
\renewcommand{\^}{\widehat}
\newcommand{\HH}{\mathbb H}
\newcommand{\Vol}{\operatorname{vol}}
\newcommand{\area}{\operatorname{area}}
\newcommand{\tr}{\operatorname{tr}}
\newcommand{\norm}{\mathcal N} 
\newcommand{\intinf}{\int_{-\infty}^\infty}
\newcommand{\ave}[1]{\left\langle#1\right\rangle} 
\newcommand{\Var}{\operatorname{Var}}
\newcommand{\Prob}{\operatorname{Prob}}
\newcommand{\sym}{\operatorname{Sym}}
\newcommand{\disc}{\operatorname{disc}}
\newcommand{\CA}{{\mathcal C}_A}
\newcommand{\cond}{\operatorname{cond}} 
\newcommand{\lcm}{\operatorname{lcm}}
\newcommand{\Kl}{\operatorname{Kl}} 
\newcommand{\leg}[2]{\left( \frac{#1}{#2} \right)}  
\newcommand{\Li}{\operatorname{Li}}

\newcommand{\sumstar}{\sideset \and^{*} \to \sum}

\newcommand{\LL}{\mathcal L} 
\newcommand{\sumf}{\sum^\flat}
\newcommand{\Hgev}{\mathcal H_{2g+2,q}}
\newcommand{\USp}{\operatorname{USp}}
\newcommand{\conv}{*}
\newcommand{\dist} {\operatorname{dist}}
\newcommand{\CF}{c_0} 
\newcommand{\kerp}{\mathcal K}

\newcommand{\Cov}{\operatorname{cov}}
\newcommand{\Sym}{\operatorname{Sym}}

\newcommand{\Ht}{\operatorname{Ht}}

\newcommand{\E}{\operatorname{\mathbb E}} 
\newcommand{\sign}{\operatorname{sign}} 
\newcommand{\meas}{\operatorname{meas}} 
\newcommand{\length}{\operatorname{length}} 

\newcommand{\divid}{d} 

\newcommand{\GL}{\operatorname{GL}}
\newcommand{\SL}{\operatorname{SL}}
\newcommand{\re}{\operatorname{Re}}
\newcommand{\im}{\operatorname{Im}}
\newcommand{\res}{\operatorname{Res}}
 \newcommand{\eigen}{\Lambda} 
\newcommand{\tens}{\mathbf t} 
\newcommand{\diam}{\operatorname{diam}}
\newcommand{\fixme}[1]{\footnote{Fixme: #1}}
 \newcommand{\EWp}{\mathbb E^{\rm WP}} 
\newcommand{\orb}{\operatorname{Orb}}
\newcommand{\supp}{\operatorname{Supp}}
\newcommand{\mmfactor }{\textcolor{red}{c_{\rm Mir}}}
\newcommand{\Mg}{\mathcal M_g} 
\newcommand{\MCG}{\operatorname{Mod}} 
\newcommand{\Diff}{\operatorname{Diff}} 
\newcommand{\If}{I_f(L,\tau)}
\newcommand{\SigGOE}{\Sigma^2_{\rm GOE}}

\newcommand{\Nc}{\mathcal{N}}  
\newcommand{\Rpos}{\R_{>0}}
\newcommand{\Rnneg}{\R_{\geq 0}}
\newcommand{\vlim}{ \overset{v}{\rightarrow}}
\newcommand{\dlim}{ \overset{d}{\rightarrow}}
\newcommand{\Pois}{\operatorname{Pois}}

\newcommand {\Vc} {\mathcal{V}}
\newcommand{\new}[1]{{\color{blue} #1}}
\newcommand{\EMP}{\E_{\Pois}}
\newcommand{\diag}{\operatorname{Diag}}
\newcommand{\off}{\operatorname{Off}}
\newcommand {\Lc} {\mathcal{L}}
\newcommand{\Mp}{\pazocal{M}}

\title[Almost sure GOE fluctuations]{Almost sure GOE fluctuations of energy levels for hyperbolic surfaces of high genus }

\author{Ze\'ev Rudnick  and Igor Wigman}
\address{School of Mathematical Sciences, Tel Aviv University, Tel Aviv 69978, Israel}
\email{rudnick@tauex.tau.ac.il}
\address{Department of Mathematics, King's College London, UK}
\email{igor.wigman@kcl.ac.uk}

\thanks{ This research was supported by the European Research Council (ERC) under the European Union's Horizon 2020 research and innovation programme (grant agreement No. 786758) and by the Israel Science Foundation (grant No. 1881/20).  }

\begin{abstract}
We study the variance of a linear statistic of the Laplace eigenvalues on a hyperbolic surface, when the surface varies over the moduli space of all surfaces of fixed genus, sampled at random according to the Weil-Petersson measure. The ensemble variance of the linear statistic   was recently shown to coincide with that of the corresponding statistic in the Gaussian Orthogonal Ensemble (GOE) of random matrix theory, in the double limit of first taking large genus and then shrinking size of the energy window. In this note we show that in this same limit,
the energy variance for a typical surface is close to the GOE result, a feature called ``ergodicity'' in the random matrix theory literature.
\end{abstract}

\date{\today}
\maketitle

 \section{Introduction  }

 \subsection{Statement of results}
 Let $X$ be a compact hyperbolic surface of genus $g\ge 2$, $\lambda_{j}=1/4+r_{j}^{2}$ the Laplace eigenvalues on $X$. Let $f$ be a test function so that its Fourier transform $\widehat{f}$ is even, smooth and compactly supported, $L\geq 1$, $\tau>0$, and define the smooth linear statistic
\begin{equation*}
N_{L,\tau}(X)  := \sum\limits_{j\ge 0}f(L(r_{j}-\tau))+f(L(r_{j}+\tau))  ,
\end{equation*}
effectively counting eigenvalues in a window of size $\approx \tau/L$ around $\tau^2$, as in \cite{RudnickCLT, RGOE, Naud, RWCLT}.
We write $N_{f,L,\tau}(X)$ as a sum of a smooth and fluctuating terms
\begin{equation*}
N_{L,\tau}(X) = \overline{N} + \widetilde{N}_{L,\tau}(X)  ,
\end{equation*}
where
\[
\overline{N} := 2(g-1)\int\limits_{-\infty}^{\infty}f(L(r-\tau))r\tanh(\pi r)dr\sim \frac{2\tau (g-1)}{L}\int\limits_{-\infty}^{\infty}f(x)dx.
\]

In \cite{RGOE} it was shown that when averaged over the moduli space $\Mg$ of surfaces of genus $g$ with respect
to the Weil-Petersson measure, the variance of $\widetilde{N}_{L,\tau}(X) $ matches that of the corresponding statistic in the Gaussian Orthogonal Ensemble (GOE) of random matrix theory,  in the double limit, taking   $g\to \infty$,
and then   $L\to \infty$: 
\begin{equation}\label{eq:RGOE}
\lim_{L\to \infty}  \lim_{g\to \infty}\EWp_g\left(  \left| \widetilde{N}_{L,\tau} -\EWp_g( \widetilde{N}_{L,\tau}) \right|^2 \right)
= \SigGOE(f)
\end{equation}
where $\SigGOE(f) = 2\intinf |x|\^f(x)^2dx$.

We now consider the energy average.
Let $\omega $ be a non-negative,  even weight function, normalized by $\intinf \omega(x)dx=1$, with Fourier transform $\widehat{\omega}$  smooth and supported in $[-1,1]$. For $T>0$ define an averaging operator
\begin{equation}
\label{eq:averageT op def}
\E_T[F]:=\frac 1T\intinf F(\tau)\omega\left(\frac \tau T \right) dt
\end{equation}
and a corresponding variance
\begin{equation}
\label{eq:varianceT op def}
\Var_T(F)= \E_T\left[\left| F-\E_T(F) \right|^2 \right].
\end{equation}

Denote by $\Vc_{T,L}(X) $  the energy variance of $\widetilde{N}_{L,\tau}(X) $, thought of as a random variable on $\Mg$:
\begin{equation*}
 \Vc_{T,L}(X) :=\Var_{T}(\widetilde{N}_{L,\tau}(X)) .
\end{equation*}
Following Berry \cite{Berry1986}, it is believed that for {\em generic}
surfaces\footnote{Arithmetic surfaces are exceptional, see \cite{BGGS, LS}.}
 $X\in \Mg$,  for fixed genus\footnote{Genus $g=2$ requires a modification due to the presence of the hyperelliptic involution.} $g>2$, the energy variance $ \Vc_{T,L}(X)$  will converge to the GOE variance $\SigGOE(f) $ when $T\to \infty$, and $L \to \infty$ but $L=o(T)$.
  Our  principal result supports  this   if we first take the large genus limit $g\to \infty$, and then the high energy limit $T\to\infty$, while restricting to energy windows $L\to \infty$ with $L=o(\log T)$, more precisely that in this limit the random variable $\Vc_{T,L}(X) $ converges in distribution to the constant $\SigGOE(f)$:
\begin{thm}
\label{thm:almost sure GOE}
For every $\epsilon>0$,
\begin{equation}
\label{eq:dev>eps -> 0 dbl lim}
\lim\limits_{\substack{L,T\rightarrow\infty\\ L=o(\log{T})}}\limsup\limits_{g\rightarrow\infty} \Prob^{WP}_{g}\left(\left|\Vc_{L,T} - \Sigma_{GOE}^{2}(f) \right| > \epsilon  \right) = 0 .
\end{equation}
\end{thm}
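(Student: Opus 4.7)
The plan is to apply Chebyshev's inequality: since
\[
\Prob^{WP}_g\bigl(|\Vc_{L,T} - \SigGOE(f)| > \epsilon\bigr) \le \epsilon^{-2}\Bigl[\Var^{WP}_g(\Vc_{L,T}) + \bigl(\EWp_g\Vc_{L,T} - \SigGOE(f)\bigr)^{2}\Bigr],
\]
it suffices to establish, in the iterated limit of the theorem, two assertions: (a) $\EWp_g\Vc_{L,T} \to \SigGOE(f)$ and (b) $\Var^{WP}_g(\Vc_{L,T}) \to 0$.

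Writing $Y_\tau(X) := \widetilde N_{L,\tau}(X)$ and using Fubini to swap $\EWp_g$ with the $\tau$-average in $\Var_T$, the mean decomposes as
\[
\EWp_g\Vc_{L,T} = \E_T\bigl[\Var^{WP}_g(Y_\tau)\bigr] + \Var_T\bigl[\EWp_g(Y_\tau)\bigr] - \Var^{WP}_g\bigl[\E_T(Y_\tau)\bigr].
\]
The first term converges to $\SigGOE(f)$ by \eqref{eq:RGOE}, provided the convergence there is sufficiently uniform in $\tau$ on the effective support of $\omega(\tau/T)$; this uniformity should follow by tracking error terms in the proof of \cite{RGOE}. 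The second term is negligible because $\EWp_g Y_\tau$ is essentially constant in $\tau$ as $g\to\infty$ (the smooth Weyl term carrying the $\tau$-dependence has been subtracted in the definition of $\widetilde N$). The third term is controlled via the Selberg trace formula: the energy average $\E_T Y_\tau$ localizes the geometric side via $\widehat\omega(T\ell_\gamma)$ to closed geodesics of length $\lesssim 1/T$, which for large $T$ contribute negligibly (their expected number under Weil--Petersson is small).

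For (b), set $G_\tau(X) := Y_\tau(X) - \E_T Y_\tau(X)$, so that $\Vc_{L,T}(X) = \E_T[G_\tau^2]$, and expand
\[
\Var^{WP}_g(\Vc_{L,T}) = \int\!\!\int \Cov^{WP}_g\bigl(G_\tau^{2}, G_{\tau'}^{2}\bigr)\, d\mu_T(\tau)\, d\mu_T(\tau'),
\]
where $d\mu_T(\tau) := T^{-1}\omega(\tau/T)\, d\tau$. The central step is to invoke the Weil--Petersson CLT for $\widetilde N_{L,\tau}$ as $g\to\infty$ (cf.\ \cite{RWCLT}) to treat $\{G_\tau\}_\tau$ as asymptotically jointly Gaussian, so that the Wick identity yields
\[
\Cov^{WP}_g\bigl(G_\tau^{2}, G_{\tau'}^{2}\bigr) \;\approx\; 2\,\Cov^{WP}_g(G_\tau, G_{\tau'})^{2}.
\]
Unpacking the covariance through Selberg's trace formula and Mirzakhani's integration formula on $\Mg$, one sees that $\Cov^{WP}_g(G_\tau, G_{\tau'})$ decays in $|\tau-\tau'|$ on the scale $1/L$, inherited from the oscillation $\cos((\tau-\tau')u)\widehat f(u/L)^2$ in the off-diagonal. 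A change of variables in the double integral then gives a bound of order $1/(LT)$, which tends to zero.

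The main obstacle lies in part (b): making the Gaussian decoupling rigorous requires controlling the joint fourth cumulants of $\{\widetilde N_{L,\tau}\}_\tau$ on moduli space. This entails extending the second-moment analysis of \cite{RGOE} to sums over four-tuples of closed geodesics, integrating them over $\Mg$ via Mirzakhani-type recursions, and bounding the contributions from configurations where geodesics coincide, share subarcs, or are mutually non-simple. The hypothesis $L = o(\log T)$ is essential here: it confines the relevant geodesics to lengths $\ll \log T$, a range in which Mirzakhani--Petri-type results guarantee that on a Weil--Petersson-random surface of large genus the geodesics are asymptotically simple and disjoint, allowing a tractable Poisson-like model to govern the four-point geometric sums and drive the fourth cumulant to zero in the limit.
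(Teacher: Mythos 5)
Your approach is genuinely different from the paper's, and at a structural level it is the ``wrong way around.'' The paper never tries to control moments of $\Vc_{T,L}$ directly over moduli space. Instead it exploits the Mirzakhani--Petri theorem as a statement about convergence in distribution of the length spectrum point process $\Lc_g$ to a Poisson process $\Lc_\infty$, writes $\Vc_{T,L}(X)=\varphi_{T,L}(\Lc_g)$ for a continuous functional $\varphi_{T,L}$ on point configurations, and invokes the continuous mapping theorem to get $\Vc_{T,L}(X)\dlim\Vc_{T,L}^\infty:=\varphi_{T,L}(\Lc_\infty)$ (Proposition~\ref{prop:VTL->Vinf}). All moment computations are then done under the explicit Poisson measure, where the factorial moment measures factor and everything is tractable (Theorem~\ref{thm:VTL conv constant}). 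The final deduction uses only convergence in distribution plus Markov's inequality in the Poisson model, with some care about atoms. Your plan of applying Chebyshev's inequality directly to $\Prob^{WP}_g$ requires first and second $\EWp_g$-moments of $\Vc_{L,T}$, i.e.\ second and fourth moments of $\widetilde N_{L,\tau}$ over $\Mg$ -- a strictly harder set of inputs than what the paper uses.

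There are two concrete gaps in your sketch. First, for part~(b), note that for fixed $L,T$ the quantity $\Var^{WP}_g(\Vc_{L,T})$ does \emph{not} tend to zero as $g\to\infty$: by Proposition~\ref{prop:VTL->Vinf} the random variable $\Vc_{T,L}$ converges in distribution to the nondegenerate random variable $\Vc_{T,L}^\infty$, so at best $\Var^{WP}_g(\Vc_{L,T})\to\Var_{\Pois}(\Vc_{T,L}^\infty)$ (and even that presupposes a uniform integrability argument, since distributional convergence alone does not give convergence of second moments). The fourth-cumulant control over $\Mg$ which you correctly identify as ``the main obstacle'' is therefore not a loose end to be tidied up -- it is the entire content, and you do not supply it; the paper sidesteps the question entirely by working in the Poisson model. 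Second, for part~(a), the cited input \eqref{eq:RGOE} is a pointwise-in-$\tau$ double limit $\lim_L\lim_g$, whereas you need $\limsup_{g}$ of the $\tau$-average over a window of size $T\to\infty$, followed by $L,T\to\infty$; the required uniformity in $\tau$ and the interchange of the $g$-limit with $\E_T$ are asserted but not justified. In short: the Chebyshev/Wick route is a legitimate alternative strategy, but as written it reduces Theorem~\ref{thm:almost sure GOE} to a stronger and unproven moment statement, while the paper's point-process route avoids ever needing it.
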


This may be viewed as an analogue of a feature called ``ergodicity'' in random matrix theory: In the limit of large matrix size, the energy variance equals the ensemble variance for almost all matrices, see \cite{Pandey}.

\subsection{Method of proof}

Using the Selberg trace formula, we  have \cite{RGOE}
\[
N_{L,\tau}(X)=   \overline{N} +\widetilde{N}_{L,\tau}(X)
\]
with
\[
 \overline{N} =2(g-1)\int\limits_{-\infty}^{\infty}f(L(r-\tau))r\tanh(\pi r)dr\sim \frac{2\tau (g-1)}{L}\int\limits_{-\infty}^{\infty}f(x)dx,
\]
and
\begin{equation}
\label{eq:N tilde osc ident}
 \widetilde{N}_{L,\tau}(X) =\frac{2}{L}\sum\limits_{\ell_\gamma \in \mathcal L_g(X) }\sum\limits_{k\ge 1}\frac{\ell_{\gamma}}{\sinh(k\ell_{\gamma}/2)}\widehat{f}\left(\frac{k\ell_{\gamma}}{L}\right)\cos(\tau k\ell_{\gamma})
\end{equation}
 where the summation is over the primitive length spectrum
 $$\mathcal L_g(X)\subset(0,\infty),$$
 the set of lengths of primitive non-oriented closed geodesics of $X$.

We view the primitive length spectrum as a random point set $\Lc_{g}$, parameterized by the random variable $X\in \Mg$, that is a random point process, and the variance $\Vc_{T,L}(X)$ is an application $\varphi(\Lc_{g})$ of a functional $\varphi$ on $\Lc_{g}$, which is continuous in a suitable topology.
A fundamental result of Mirzakhani and Petri \cite{MP} shows that the point processes $\mathcal L_g$ converge, in distribution as $g\to \infty$, to a Poisson point process, denoted $\Lc_{\infty}$ on the positive reals, with intensity measure $$d\nu_{MP}(x) = \frac{2\sinh^2 x}{x} dx.$$

From the theory of point processes, it follows that $\Vc_{T,L}(X)=\varphi(\Lc_{g})$ converges, in distribution as $g\rightarrow\infty$ to the random variable $\Vc_{T,L}^\infty= \varphi(\Lc_{\infty})$, see Proposition \ref{prop:VTL->Vinf}. To show
Theorem~\ref{thm:almost sure GOE}, we argue that it suffices to prove that $\Vc_{T,L}^\infty$, converges in distribution to the constant $\SigGOE(f)$ as $L,T\to \infty$ with $L=o(\log T)$ (see Theorem \ref{thm:VTL conv constant}).
The proof of the latter result takes up the bulk of this paper, and is done in
\S~\ref{sec:proof of conv to constant}.

\section{Outline of the proof of the main result}

We rewrite \eqref{eq:N tilde osc ident} as
\begin{equation}
\label{eq:N tilde osc expr H}
 \widetilde{N}_{L,\tau}(X)
 =\sum_{\ell\in \Lc_{g}}  H_{L,\tau}(\ell),
\end{equation}
where
\begin{equation}
\label{eq:HLt def}
H_{L,\tau}(x)= 2\frac xL \sum_{k\geq 1} \frac{\^f(\frac{kx}{L})  \cos(kx \tau)}{\sinh(kx/2)}
\end{equation}
that we think of as a functional of the length spectrum, that is considered as a random point process, as in the following notation.

\begin{notation}
\label{not:length spec}

\begin{enumerate}[i.)]

\item For $g\ge 1$ let $\Lc_{g}=\Lc_{g}(X)=\{\ell_{j}\}_{j\ge 1}$ be the (random) primitive unoriented length spectrum of $X\in \Mg$,
considered as a random point process on $\R_{>0}$.

\item Let $\Lc_{\infty} = \{\ell_{j}\}_{j\ge 1}$ be the Poisson point process of intensity
\begin{equation}
\label{eq:nu dens def}
\nu_{MP}(dt):=\frac{2\sinh(t/2)^{2}}{t}dt.
\end{equation}

\end{enumerate}
\end{notation}

It was proved by Mirzakhani-Petri ~\cite{MP}, that $\Lc_{g}$ converges, as a sequence of point processes, to $\Lc_{\infty}$, see \S~\ref{sec:background PP} below. It is then natural to consider the analogue of $\widetilde{N}_{L,\tau}(X)$, with $\Lc_{g}$ replaced by $\Lc_{\infty}$. That is, define the random variable
\begin{equation}
\label{eq:SLtau def}
S_{L,\tau} = \sum_{\ell\in \Lc_{\infty}}  H_{L,\tau}(\ell),
\end{equation}
where $H_{L,\tau}(x)$ is given by \eqref{eq:HLt def}. We recall the averaging operator $\E_{T}[\cdot]$, as in \eqref{eq:averageT op def},
and apply the corresponding variance operator $\Var_T(\cdot)$ as in \eqref{eq:varianceT op def}, on $S_{L,\tau}$:
$$\Vc_{T,L}^{\infty}:=\Var_{T}(S_{L,\tau})=\E_T\left[ \left|N_{L,\tau}(X)-\E_T\left(N_{L,\tau}\left(X\right)\right| \right)^2\right].$$
We aim to express $\Vc_{T,L}^{\infty}$ directly in terms of $\Lc_{\infty}$.

To this end, squaring \eqref{eq:SLtau def}, after some simple manipulations with the resulting expression, gives
\begin{equation}\label{semiclassical expression for V}
 \Vc_{T,L}^{\infty} = \left( \frac 2L \right)^2\sum_{\ell_1,\ell_2\in \Lc_{\infty}}
 \sum_{k_1,k_2\geq 1} \frac { H_L(k_1\ell_1)  H_L(k_2\ell_2) }{k_1  k_2 }
 U_T(k_1\ell_1,k_2\ell_2)
\end{equation}
with
\begin{equation}\label{eq:HL def}
H_L(x) =  \frac{x\^f\left( \frac xL \right)  }{\sinh(x/2)}
\end{equation}
and
\begin{equation}
\label{def of UT}
\begin{split}
U_T(x,y) &=\E_T \left[\cos\left(\tau x\right) \cos\left(\tau y\right) \right] -\E_T \left[\cos\left(\tau x \right) \right] \cdot
\E_T \left[\cos \left(\tau y\right) \right]
\\
&=  \frac 12   \^w\left( T\left( x-y\right)\right)+ \frac 12 \^w\left(T\left(x+y\right)\right)-\^w\left(Tx\right) \cdot \^w\left(Ty\right) .
\end{split}
\end{equation}
Note that $U_T(x,y)$ is real valued since $w(x)$ is real valued and even. Starting from \eqref{eq:N tilde osc expr H} in place of
\eqref{eq:SLtau def}, and likewise yields
\begin{equation}
\label{semiclassical expression for Vg}
\Vc_{T,L}(X) = \left( \frac 2L \right)^2\sum_{\ell_1,\ell_2\in \Lc_{g}} \sum_{k_1,k_2\geq 1} \frac { H_L(k_1\ell_1)  H_L(k_2\ell_2) }{k_1  k_2 }
 U_T(k_1\ell_1,k_2\ell_2).
\end{equation}

Below we will be able to deduce Theorem \ref{thm:almost sure GOE} from the following two results: Proposition \ref{prop:VTL->Vinf}, proved at the end of \S~\ref{sec:background PP}, asserting that the random variables $\Vc_{T,L}(X)$ converge in distribution to $\Vc_{T,L}^{\infty}$, and
Theorem \ref{thm:VTL conv constant}, proved along \S~\ref{sec:proof of conv to constant}, asserting that, in the regime of Theorem \ref{thm:almost sure GOE}, $V_{T,L}^{\infty}$ converge, in distribution, to the constant $\SigGOE(f)$.

\begin{proposition}
\label{prop:VTL->Vinf}
For every $T,L>0$, the random variables $\Vc_{T,L}=\Vc_{T,L}(X)$, with $X\in\Mg$ random uniform w.r.t. the WP measure,
converge, in distribution as $g\rightarrow\infty$, to the random variable $\Vc_{T,L}^{\infty}$.
\end{proposition}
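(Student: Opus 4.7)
The strategy is to exhibit $\Vc_{T,L}(X)$ as the value of a continuous functional $\Phi$ on the length-spectrum point process, and then conclude by combining the Mirzakhani--Petri convergence $\Lc_g \dlim \Lc_\infty$ with the continuous mapping theorem.

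First I would exploit the compact support of $\widehat f$: choosing $C>0$ with $\supp \widehat f\subset[-C,C]$, the function $H_L$ vanishes on $|x|\ge CL$, so in \eqref{semiclassical expression for Vg} only pairs with $k\ell\le CL$ contribute. Consequently $\Vc_{T,L}(X)$ depends on $\Lc_g$ only through its restriction to the compact window $(0,CL]$, which is almost surely a finite set, and for each such $\ell$ only indices $k\le CL/\ell$ appear. Defining
\[
\Phi(\mu):=\Big(\tfrac{2}{L}\Big)^{2}\sum_{\ell_1,\ell_2\in\mu}\sum_{k_1,k_2\ge 1}\frac{H_L(k_1\ell_1)H_L(k_2\ell_2)}{k_1k_2}\,U_T(k_1\ell_1,k_2\ell_2),
\]
one has $\Vc_{T,L}(X)=\Phi(\Lc_g)$ and $\Vc_{T,L}^{\infty}=\Phi(\Lc_\infty)$, so the proposition reduces to showing $\Phi(\Lc_g)\dlim \Phi(\Lc_\infty)$.

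The next step is to verify that $\Phi$ is continuous at $\mu=\Lc_\infty$ almost surely, in the vague topology on point configurations on $(0,\infty)$. For each fixed $(k_1,k_2)$ the summand is smooth and compactly supported in $(\ell_1,\ell_2)\in(0,\infty)^{2}$, so each term is continuous in $\mu$. The outer $k$-sum is controlled by the elementary bound $|H_L(k\ell)|/k\lesssim \|\widehat f\|_\infty\cdot \ell/\sinh(k\ell/2)$, which yields absolute convergence uniformly over configurations whose infimum is bounded below by any $\varepsilon>0$. Since the Mirzakhani--Petri intensity $\nu_{MP}(dt)=2\sinh^{2}(t/2)/t\,dt$ is integrable near the origin, almost surely $\Lc_\infty\cap(0,\varepsilon)=\emptyset$ for some random $\varepsilon>0$, and $\Lc_\infty$ has no point exactly at $CL$, so $\Lc_\infty$ is a.s.\ a continuity point of $\Phi$.

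Finally I would appeal to Skorokhod's representation theorem to realise $\Lc_g\to\Lc_\infty$ on a common probability space with almost sure convergence, and then use pointwise continuity of $\Phi$ to conclude $\Vc_{T,L}(X)=\Phi(\Lc_g)\to\Phi(\Lc_\infty)=\Vc_{T,L}^{\infty}$ almost surely, and in particular in distribution. The principal technical obstacle is treating the infinite $k$-sum uniformly as points of $\Lc_g$ might approach the origin; this is overcome by the a.s.\ separation of Poisson points from $0$ inherited from the integrability of $\nu_{MP}$ near the origin, which in turn transfers to $\Lc_g$ for $g$ large via the Mirzakhani--Petri convergence.
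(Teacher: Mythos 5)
Your proof is correct and follows essentially the same route as the paper: write $\Vc_{T,L}(X)=\varphi_{T,L}(\Lc_g)$ and $\Vc_{T,L}^\infty=\varphi_{T,L}(\Lc_\infty)$ for a common functional $\varphi_{T,L}$ on $\Nc$, invoke Mirzakhani--Petri for $\Lc_g\dlim\Lc_\infty$, and conclude by the continuous mapping theorem (which you realize via Skorokhod representation). The only difference is that the paper delegates the continuity of $\varphi_{T,L}$ to a cited lemma (\cite[Lemma~2.1]{RWCLT}), whereas you supply that argument directly, using the compact support of $\widehat f$ and the a.s.\ separation of $\Lc_\infty$ from the origin.
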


\begin{thm}
\label{thm:VTL conv constant}
One has
\[
\lim_{\substack{L,T\to \infty\\L=o(\log T)}} \EMP \left[ \left| \Vc_{T,L}^\infty-\SigGOE(f) \right| \right] = 0 .
\]
\end{thm}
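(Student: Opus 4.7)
The plan is to establish $L^{1}$ convergence by a mean-plus-variance estimate on a diagonal/off-diagonal decomposition of the Poisson expression \eqref{semiclassical expression for V}. I would write $\Vc_{T,L}^\infty = D_{T,L} + E_{T,L}$, where $D_{T,L}$ collects the ``true diagonal'' terms with $(k_1, \ell_1) = (k_2, \ell_2)$ and $E_{T,L}$ collects the off-diagonal remainder. By the triangle and Cauchy-Schwarz inequalities,
\[
\EMP\bigl| \Vc_{T,L}^\infty - \SigGOE(f) \bigr| \le \bigl|\EMP[D_{T,L}] - \SigGOE(f)\bigr| + \sqrt{\Var(D_{T,L})} + \EMP|E_{T,L}|,
\]
so it suffices to drive each of the three summands to zero as $L, T \to \infty$ with $L = o(\log T)$.

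For the diagonal, the support condition $\supp \hat w \subseteq [-1,1]$ forces $U_T(k\ell, k\ell) = \tfrac12$ whenever $k\ell > 1/T$ (since $\hat w(0) = 1$), while the short-distance contribution from $k\ell \le 1/T$ is negligible because $d\nu_{MP}(t) \sim t\,dt$ near the origin and $H_L$ is bounded. Thus $D_{T,L}$ reduces, up to a vanishing error, to the Poisson linear statistic $\tfrac{2}{L^2}\sum_{\ell\in\Lc_\infty}\sum_{k \ge 1} H_L(k\ell)^2/k^2$, to which Campbell's formula applies. In the leading $k=1$ term, the $\sinh^2(t/2)$ in $H_L(t)^2$ cancels the one in $d\nu_{MP}(t) = 2\sinh^2(t/2)/t\,dt$, leaving $(4/L^2)\int_0^\infty t\,\hat f(t/L)^2\,dt$; the substitution $u = t/L$ converts this exactly to $4\int_0^\infty u\,\hat f(u)^2\,du = \SigGOE(f)$. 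The $k \ge 2$ tail contributes $O(1/L^2)$ via $\sinh^2(t/2) \ll e^{-(k-1)t}\sinh^2(kt/2)$, and a parallel computation with the second factorial moment of a Poisson linear statistic bounds $\Var(D_{T,L}) = O(1/L^4)$.

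For the off-diagonal, the support of $\hat w$ localises $U_T(k_1\ell_1, k_2\ell_2)$ to the strip $|k_1 \ell_1 - k_2 \ell_2| \le 1/T$, together with a negligible corner where both $k_i \ell_i \le 1/T$. The Campbell formula for the second factorial moment of $\Lc_\infty$ then bounds
\[
\EMP|E_{T,L}| \le \frac{C}{L^2}\sum_{k_1, k_2 \ge 1}\frac{1}{k_1 k_2}\int\!\!\int_{|k_1 s - k_2 t|\le 1/T} |H_L(k_1 s) H_L(k_2 t)|\,d\nu_{MP}(s)\,d\nu_{MP}(t).
\]
With $|H_L(x)| \ll x\,\mathbf{1}_{x\le C_0 L}\, e^{-x/2}$ (where $C_0 = \sup \supp \hat f$) and $d\nu_{MP}(t) \asymp e^t/t\,dt$ for large $t$, the integrand in the $(k_1,k_2)=(1,1)$ piece concentrates on $s \asymp C_0 L$ and is of order $e^{C_0 L}/(L^2 T)$, vanishing precisely in the regime $L = o(\log T)$. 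Pieces with $k_1 + k_2 \ge 3$ are smaller because the net exponential rate $1/k_1 + 1/k_2 - 1$ becomes $\le 0$, reducing them to a polynomial $O(L/T)$ bound; the same-point case $\ell_1 = \ell_2$ with $k_1 \ne k_2$ forces $\ell \le 1/T$ and is negligible.

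The main obstacle will be the $(k_1,k_2)=(1,1)$ off-diagonal estimate, where the exponential growth $e^t/t$ of the Mirzakhani-Petri intensity must be balanced delicately against the exponential decay $t^2 e^{-t}$ of $|H_L|^2$. These factors cancel on the support $t \lesssim C_0 L$, so the entire surplus comes from the polynomial prefactor and the $1/T$ gained from the localisation of $U_T$, placing the estimate exactly at the threshold where $L = o(\log T)$ is the precise hypothesis that saves the argument; a bound such as $L \ge C\log T$ would break the scheme. A secondary technical point is to control the pieces of $U_T$ arising from $\hat w(T(x+y))$ and $\hat w(Tx)\hat w(Ty)$, which are absorbed into the ``negligible corner'' by a direct estimate near the origin using the Taylor vanishing of $\nu_{MP}$.
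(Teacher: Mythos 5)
Your proposal is correct and follows essentially the same route as the paper: split the Poisson expression into a diagonal main term and an off-diagonal remainder, evaluate the diagonal mean by Campbell's formula (with the $\sinh^2$ cancellation against $d\nu_{MP}$ producing $\SigGOE(f)$ exactly from the $k=1$ term and an $O(L^{-2})$ tail), control its fluctuation by the Poisson second-moment identity giving $\Var = O(L^{-4})$, and kill the off-diagonal by playing the $1/T$ localisation from $\supp\^w$ against the $e^{O(L)}$ growth of $|H_L|\,d\nu_{MP}$ — which is precisely where $L=o(\log T)$ enters; the paper merely groups the terms slightly differently (its ``diagonal'' is $\ell_1=\ell_2$ with all $k_1,k_2$, then it isolates $(k_1,k_2)=(1,1)$). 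One harmless slip: for off-diagonal pieces with $k_1+k_2\ge 3$ the net exponential rate $1/k_1+1/k_2-1$ is \emph{not} always $\le 0$ (it equals $1/2$ for $(k_1,k_2)=(1,2)$), so those pieces are not polynomially bounded as you claim; they are still $\ll e^{O(L)}/T$, which vanishes in the same regime, so the conclusion is unaffected.
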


Theorem \ref{thm:VTL conv constant} with Markov's inequality implies that the probability that
$\Var_T(S_{L,\tau} ^2)$ is far from $\SigGOE(f)$ vanishes: for every $\epsilon>0$,
\begin{equation}
\label{eq:VTL Pois close GOE}
\lim_{\substack{L,T\to \infty\\L=o(\log T)}}\Prob_{\operatorname{Pois}} \left( \left|  \Vc_{T,L}^\infty   -\SigGOE(f) \right|>\epsilon \right) =0.
\end{equation}

\section{Background on point processes}
\label{sec:background PP}

Let $\Nc$ be the space of measures $\mu$ on $\R_{\ge 0}$ s.t. $\mu(0)=0$, for every $t\ge 0$, $\mu(t)\in \Z_{\ge 0}$, where $\mu(t):=\mu((0,t])$, which is assumed to be right-continuous at $0$.
A measure $\mu\in\Nc$ has a representation
\begin{equation}
\label{eq:mu=sum delta}
\mu=\sum\limits_{j=1}^{k}\delta_{\xi_{j}},
\end{equation}
where $0\le k\le +\infty$, and $0<\xi_{1}\le \xi_{2}\le\ldots$, and sometimes it is convenient to regard $\mu$ as a discrete multi-set
\begin{equation}
\label{eq:mu Nc discrete set}
\mu=\{\xi_{j}\}_{j\ge 1}\subseteq \R
\end{equation}
Given a sequence $\{\mu_{n}\}\subseteq \Nc$ and $\mu\in\Nc$, we say that $\mu_{n}$ {\em vaguely converges} to $\mu$, denoted $\mu_{n}\rightarrow\mu$, if for all $t\in\R_{\ge 0}$ so that $\mu(\cdot)$ is continuous at $t$,
\begin{equation}
\label{eq:mun->mu vague}
\mu^{n}(t)\rightarrow\mu(t).
\end{equation}
Equivalently, for every non-negative continuous test function of compact support $f:\R_{\ge 0}\rightarrow\R$, one has
$$ \int\limits_{0}^{\infty}f(x)\mu^{n}(dx) \rightarrow   \int\limits_{0}^{\infty}f(x)\mu(dx) .$$

A point process (in a somewhat restrictive sense, sufficient for our purposes)
is a {\em random} element $\mu\in\Nc$ (w.r.t. the vague topology on $\Nc$, generating the Borel $\sigma$-algebra on $\Nc$).
(Equivalently, it is a probability measure on $\Nc$.)
We will regard a point process as a random element of $\Nc$.


We say that a point process $\mathbf{N}$ is Poisson with intensity measure $\nu$ on $\R_{>0}$, if for every Borel set $B\subseteq \R$, the distribution of $\mathbf{N}(B)$ is Poisson with parameter $\nu(B)$, and the random variables $\mathbf{N}(B_{1}),\ldots,\mathbf{N}(B_{k})$ are independent for every $k\ge 2$, and any choice of {\em disjoint} Borel sets $B_{1},\ldots, B_{k}$.
A sequence of point processes $\mathbf N_n$ on $\R_{>0}$ is said to converge (in distribution) to a point process $\mathbf N$
(written $\mathbf N_n \dlim \mathbf N$) if the sequence of random vectors $(\mathbf N_n(B_1),\dots, \mathbf N_n(B_k))$ converges in distribution to the random vector $(\mathbf N(B_1),\dots, \mathbf N(B_k))$ for all $k\geq 1$ and Borel sets $B_i$ with boundaries satisfying $\mathbf N(\partial B_i)=0$ almost surely for all $i$. The  convergence of $\mathbf N_n$ to $\mathbf N$ implies
~\cite[Theorem 23 on p. 521]{Grandell} that the random element $\mathbf N_n$ converges, in distribution, to $\mathbf N$.


For a measure $\mu\in \Nc$ as in \eqref{eq:mu=sum delta} with $k\ge 1$ finite, and $1\le m\le k$,
one defines ~\cite[p. 28]{LaPe}
the $m$'th {\em factorial measure} $\mu^{(m)}$ of $\mu$ on $\R_{\ge 0}^{(m)}$ as
\begin{equation*}
\mu^{(m)} = \sum\limits_{i_{1},\ldots i_{m} \text{ distinct}}\delta_{(x_{i_{1}},\ldots x_{i_{m}})}.
\end{equation*}
Next, for a point process $\eta$ we define
~\cite[Definition 4.9]{LaPe} the $m$'th factorial {\em moment} measure as $$\alpha_{m}(C)=\alpha_{\eta;m}(C):= \E\left[\eta^{(m)}(C)\right]$$ for
$C\subseteq \R_{\ge 0}^{m}$ measurable (Borel).
Then the $m$'th factorial measure of a Poisson point process $\eta$ with intensity $\lambda$ is ~\cite[Corollary 4.10]{LaPe} a Poisson point process with the intensity $\lambda^{m}$, the $m$'th product measure of $\lambda$ on $\R_{\ge 0}^{m}$, that is also the $m$'th factorial moment measure of $\eta$.

In what follows use the identification \eqref{eq:mu Nc discrete set} of a measure $\mu\in\Nc$. Thus the factorial moment measure of a {\em proper} point process $\eta$ (that includes any Poisson point process) computes the correlations of $\{\xi_{j}\}$ in the sense that
for a function $f:\R_{\ge 0}^{m}\rightarrow\R$,
\begin{equation}
\label{eq:exp Camp gen}
\E \Big[ \sum_{\substack{(\xi_{1},\dots,\,\xi_{m})\in \eta^{m}\\ \text{ distinct}}} f(\xi_{1},\dots,\xi_{m}) \Big]
= \int\limits_{\R_{\ge 0}^{m}}f(\xi_{1},\ldots,\xi_{m})d\alpha_{m}(\xi_{1},\dots,\xi_{m}) ,
\end{equation}
see ~\cite[Definition 4.9 and immediately after]{LaPe}. In particular, for $m=1$, this is Campbell's formula ~\cite[Proposition 2.7]{LaPe}.


We conclude this section with the following observation. By comparing \eqref{semiclassical expression for V} to
\eqref{semiclassical expression for Vg}, we may express both as
\begin{equation*}
\Vc_{T,L}(X) = \varphi_{T,L}(\Lc_{g});\;\; \Vc_{T,L}^{\infty} = \varphi_{T,L}(\Lc_{\infty}),
\end{equation*}
with the functional $\varphi_{T,L}:\Nc\rightarrow\R$, defined as
\begin{equation}
\label{eq:varphi func def}
\varphi_{T,L}(\mu) :=  \left( \frac 2L \right)^2\sum_{\ell_1,\ell_2\in \mu} \sum_{k_1,k_2\geq 1} \frac { H_L(k_1\ell_1)  H_L(k_2\ell_2) }{k_1  k_2 } U_T(k_1\ell_1,k_2\ell_2).
\end{equation}
This is easily seen to be continuous, see ~\cite[Lemma 2.1]{RWCLT}.

\begin{proof}[Proof of Proposition \ref{prop:VTL->Vinf}]
We interpret the result of Mirzakhani-Petri ~\cite[Theorem 4.1]{MP} as the convergence, as $g\rightarrow\infty$, of the point processes $\Lc_{g}$ to the Poisson point process $\Lc_{\infty}$ (Notation \ref{not:length spec}), and recall that it implies that the random element $\Lc_{g}\in \Nc$ converges, in distribution, to $\Lc_{\infty}\in\Nc$. Hence, an application of the Continuous Mapping theorem, with the continuous functional $\varphi:\Nc\rightarrow\R$ as in \eqref{eq:varphi func def}, yields that, as $g\rightarrow\infty$, the random variables
$$\Vc_{T,L}(X) =\varphi(\Lc_{g})$$ converge, in distribution, to
$$ \Vc_{T,L}^{\infty} = \varphi(\Lc_{\infty}),$$ that is, the statement of Proposition \ref{prop:VTL->Vinf}.

\end{proof}

\section{GOE fluctuations for the Poisson model: Proof of Theorem \ref{thm:VTL conv constant}}
\label{sec:proof of conv to constant}

We need to compute the expected value of $\left|\Vc_{T,L}^\infty-\SigGOE(f) \right|$ with $\Vc_{T,L}^\infty$ given by
\eqref{semiclassical expression for V} with $\ell_1,\ell_2$ selected according to the Poisson point process. We write \eqref{semiclassical expression for V} according to whether $\ell_1=\ell_2$ (the diagonal) and $\ell_1\neq \ell_2$:
\[
 \Vc_{T,L}^\infty= \sum_{\ell_1=\ell_2} + \sum_{\ell_1\neq \ell_2} = \diag+ \off
\]
and then we have
\[
 \EMP \left[\left|\Vc_{T,L}^\infty-\SigGOE(f) \right|  \right]  \leq  \EMP \left[\left|\diag-\SigGOE(f) \right|  \right]  + \EMP\left[ \left| \off \right| \right] .
\]
We will show that both terms tend to zero as $L,T\to \infty$, $L=o(\log T)$.


\subsection{Bounding $\EMP\left[ \left| \off \right| \right]$}

 \begin{lem}
\label{lem:Ioffdiag vanish exp}
There is some $C_{0}=C_0(f)>0$ so that
\begin{equation}
\label{eq:Ioffdiag vanish exp}
\EMP\left[ \left| \off \right| \right]\ll \frac{e^{2C_{0}L}}{TL},
\end{equation}
the constant involved in the `$\ll$'-notation absolute.
\end{lem}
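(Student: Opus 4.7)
The plan is to apply the triangle inequality to the definition of $\off$, then Campbell's formula \eqref{eq:exp Camp gen} for the second factorial moment of the Poisson process $\Lc_\infty$, which (as recalled after Notation~\ref{not:length spec}) equals the product measure $d\nu_{MP}\otimes d\nu_{MP}$. Since every summand becomes non-negative, Tonelli justifies interchanging the expectation with the sums over $k_1,k_2$ and the points $\ell_1\neq\ell_2$, yielding
\[
\EMP[|\off|]\leq \frac{4}{L^2}\sum_{k_1,k_2\geq 1}\frac{1}{k_1k_2}\iint_{\R_{\geq 0}^2}|H_L(k_1\ell_1)H_L(k_2\ell_2)U_T(k_1\ell_1,k_2\ell_2)|\,d\nu_{MP}(\ell_1)d\nu_{MP}(\ell_2).
\]

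Substituting $|H_L(z)|=z|\widehat f(z/L)|/\sinh(z/2)$ and $d\nu_{MP}(\ell)=(2\sinh^2(\ell/2)/\ell)d\ell$, then changing variables $x=k_1\ell_1$, $y=k_2\ell_2$, the integrand reduces up to absolute constants to $|\widehat f(x/L)\widehat f(y/L)||U_T(x,y)|$ times the ratio $\sinh^2(x/(2k_1))\sinh^2(y/(2k_2))/(\sinh(x/2)\sinh(y/2))$. The decisive elementary input will be the inequality $k\sinh(z/(2k))\leq \sinh(z/2)$ for $z\geq 0$ and $k\geq 1$, which follows since $k\mapsto k\sinh(z/(2k))$ is non-increasing on $[1,\infty)$. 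This yields $\sinh^2(z/(2k))/\sinh(z/2)\leq \sinh(z/2)/k^2$, identifying $(k_1,k_2)=(1,1)$ as the dominant term and rendering the sum over $(k_1,k_2)$ absolutely convergent via $\sum_{k}k^{-3}<\infty$.

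It then remains to estimate
\[
\frac{1}{L^2}\iint|\widehat f(x/L)\widehat f(y/L)|\sinh(x/2)\sinh(y/2)|U_T(x,y)|\,dxdy.
\]
The support $\supp\widehat f\subseteq[-\sigma,\sigma]$ restricts integration to $x,y\in[0,L\sigma]$, on which $\sinh(x/2)\sinh(y/2)\leq e^{L\sigma}/4$. For the $U_T$-integral, the three-term decomposition \eqref{def of UT} together with $\supp\widehat w\subseteq[-1,1]$ localizes the last two summands ($\tfrac12\widehat w(T(x+y))$ and $\widehat w(Tx)\widehat w(Ty)$) to $[0,1/T]^2$, contributing only $O(1/T^2)$, while the main term $\tfrac12\widehat w(T(x-y))$ is supported in the diagonal strip $|x-y|\leq 1/T$ of area $O(L/T)$ within $[0,L\sigma]^2$. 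Hence $\iint|U_T|\,dxdy\ll L/T$, and combining the bounds above produces $\EMP[|\off|]\ll e^{L\sigma}/(LT)$, which is the lemma with $C_0=\sigma/2$ (any $f$-dependent constants such as $\|\widehat f\|_\infty^2$ being absorbed into a slight increase of $C_0$ to keep the implied constant absolute).

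The principal difficulty is the $\sinh$ inequality of the second paragraph: without the factor $1/k^2$ the sum over $(k_1,k_2)$ would not obviously converge, and $(1,1)$ would not cleanly be the dominant pair; the rest is careful bookkeeping of the support sizes of $\widehat f$ and $\widehat w$ and the exponential growth of $\sinh$ on $[0,L\sigma]$.
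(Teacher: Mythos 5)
Your proof is correct and follows essentially the same route as the paper: apply the second factorial moment (Campbell) formula, use the compact supports of $\widehat f$ and $\widehat\omega$ to restrict the integration region and localize $U_T$ to a diagonal strip of width $O(1/T)$, and exploit the monotonicity of $k\mapsto k\sinh(z/(2k))$ — equivalently, the paper's $\sinh(t)/\sinh(kt)\le 1/k$ — to make the $(k_1,k_2)$-sum absolutely convergent. Your change of variables $x=k_1\ell_1$, $y=k_2\ell_2$ is a cosmetic reorganization that in fact yields slightly better $k$-decay ($k_1^{-3}k_2^{-3}$ instead of the paper's $k_1^{-2}k_2^{-2}$), but the structure and conclusion of the argument are the same.
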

\begin{proof}
We apply the correlation formula \eqref{eq:exp Camp gen} with $m=2$ to obtain
\[
f(\xi_{1},\xi_{2}) =
\frac{4}{L^{2}}\sum\limits_{k_{1},k_{2}\ge 1} \frac{|H_{L}(k_{1}\xi_{1})||H_{L}(k_1\xi_1))|}{k_1   k_2}
  | U_T(k_{1}\xi_{1},k_{1}\xi_{1}) |
\]
and intensity $d\nu_{MP}(x) = 2\sinh^2(x/2)/x \;dx$, to yield that
\begin{multline}\label{eq:IOFF- def}
\EMP\left[ \left| \off \right| \right]   = \int\limits_{0}^{\infty}\int\limits_{0}^{\infty}f(x,y)d\nu_{MP}(x)d\nu_{MP}(y)
\\ =\frac{4}{L^{2}}\sum\limits_{k_{1},k_{2}\ge 1}\int\limits_{0}^{\infty}\int\limits_{0}^{\infty}
\frac{| H_{L}(k_{1}x)H_{L}(k_{2}y)|}{k_1k_2}
 |U_T(k_1x,k_2y)| d\nu_{MP}(x)d\nu_{MP}(y)
\\ =
\frac{16}{L^{2}}\sum\limits_{k_{1},k_{2}\ge 1}\int\limits_{0}^{\infty}\int\limits_{0}^{\infty}
\widehat{f}\left(\frac{k_{1}x}{L} \right)  \widehat{f}\left(\frac{k_{2}y}{L} \right)  \cdot
\frac{\sinh(x/2)^{2}\cdot \sinh(y/2)^{2}}{\sinh\left( \frac{k_{1}x}{2} \right) \cdot \sinh\left( \frac{k_{2}y}{2} \right)}
\times\\
\times |U_T(k_1x,k_2y)|
dxdy
\end{multline}
by \eqref{eq:HL def}.
In what follows we show that when $L=o(\log{T})$, this  expression vanishes.

We note that   $\widehat{f}$ being compactly supported forces that in the range of the integral
one has $0< x\ll \frac{L}{k_{1}}$, so that $\sinh(x/2) \le e^{C_{0}\cdot L}$ for some absolute $C_{0}>0$, and likewise $\sinh(y/2) \le e^{C_{0}\cdot L}$. Since for $k\ge 1$ and $t\in \R$
\begin{equation*}
\frac{\sinh(t)}{\sinh(kt)} \le \frac{1}{k},
\end{equation*}
we have
$$\frac{\sinh(x/2)^{2}\cdot\sinh(y/2)^{2}}{\sinh\left( \frac{k_{1}x}{2} \right) \cdot \sinh\left( \frac{k_{2}y}{2} \right)} \le e^{2C_{0}\cdot L}\cdot \frac{1}{k_{1}k_{2}}.$$ Given $x$ in the domain of the integration (as above, $x\ll \frac{L}{k_{1}}$), the compact support of $\widehat{\omega}$  forces that $y$ is contained in an interval of length $\frac{1}{k_{2}T}$
(from \eqref{def of UT}). Therefore, taking into account the boundedness of both $\widehat{f}$ and $\widehat{w}$ (hence of $U_T$), we have
\begin{equation}
\label{eq:Ioff diag vanish single term}
\begin{split}
\frac{1}{L^{2}}\int\limits_{0}^{\infty}\int\limits_{0}^{\infty}
&\widehat{f}\left(\frac{k_{1}x}{L} \right)  \widehat{f}\left(\frac{k_{2}y}{L} \right)
\frac{\sinh(x/2)^{2} \sinh(y/2)^{2}}{\sinh\left( \frac{k_{1}x}{2} \right)  \sinh\left( \frac{k_{2}y}{2} \right)}
|U_T(k_1x,k_2y)|
dxdy
\\&\ll \frac{1}{L^{2}}\cdot \frac{e^{2C_{0}L}}{k_{1}k_{2}}\cdot \frac{1}{Tk_{2}}\cdot \frac{L}{k_{1}} = \frac{e^{2C_{0}L}}{TL}\cdot \frac{1}{k_{1}^{2}k_{2}^{2}}.
\end{split}
\end{equation}
The bound \eqref{eq:Ioffdiag vanish exp} of
Lemma \ref{lem:Ioffdiag vanish exp} finally follows upon summing up \eqref{eq:Ioff diag vanish single term}
for $k_{2}, k_{1}\ge 1$ 
and substituting it into \eqref{eq:IOFF- def}.
\end{proof}

\subsection{Bounding the diagonal term}
We now want to bound the term $ \EMP \left[\left|\diag-\SigGOE(f) \right|  \right] $. Recall
\[
\diag=   \frac 4{L^2}\sum_x \sum_{k_1,k_2\geq 1} \frac { H_L(k_1x)   H_L(k_2x) }{k_1  k_2 } U_T(k_1x,k_2x)
\]

We separate out the contribution of the pair $(k_1,k_2)=(1,1)$ and the rest and use
\begin{multline}\label{separate k1+k2 geq 3}
\EMP \left[\left|\diag-\SigGOE(f) \right|  \right]   \leq  \EMP\left[   \left|  \frac 4{L^2}\sum_x H_L(x)^2 U_T(x,x) - \SigGOE(f) \right| \right]
\\ +
  \frac 4{L^2}  \EMP\left[   \sum_x \sum_{k_1+k_2\geq 3}  \frac {| H_L(k_1x)   H_L(k_2x)| }{k_1  k_2 }
  |U_T\left(k_1x,k_2x \right)| \right] .
\end{multline}

\subsubsection{The sum $k_1+k_2\geq 3$}

 We set
 \[
D(k_1,k_2) :=   \frac 4{L^2}   \EMP\left[   \sum_x  \frac {| H_L(k_1x)   H_L(k_2x)| }{k_1  k_2 }
  |U_T\left(k_1x,k_2x \right)| \right]
 \]
 and want to show that
 \[
 \sum_{k_1+k_2\geq 3} D(k_1,k_2) \ll \frac{\log L}{L^2}  .
 \]

 We apply Campbell's formula (\eqref{eq:exp Camp gen} with $m=1$)
 \begin{equation}\label{campbell formula MP}
   \EMP\left[   \sum_x  h(x) \right] =\int_0^\infty h(x) d\nu_{MP}(x)
 \end{equation}
 with $h(x) =  \frac {| H_L(k_1x)   H_L(k_2x)| }{k_1  k_2 }   |U_T\left(k_1x,k_2x \right)| $
  to find
  \[
  \begin{split}
  D(k_1,k_2) &= \frac{4}{L^2}\int_0^\infty  \frac {| H_L(k_1x)   H_L(k_2x)| }{k_1  k_2 }   |U_T\left(k_1x,k_2x \right)| d\nu_{MP}(x)
  \\
  &\ll   \int_0^{C_0 L} \frac{\sinh^2(Lx/2)}{\sinh(Lk_1x/2)\sinh(Lk_2x/2)}  x  dx
  \end{split}
  \]
 on using boundedness of $U_T$ and $\^f$ being supported in $[-C_0,C_0]$. It is easy to show
 (see the proof of \cite[Lemma 5.2]{RGOE})
 that  this last integral is bounded by
 \[
  \ll \min \left( \frac 1{L^2(k_1+k_2-2)^2}, \frac 1{k_1k_2^3} \right)
 \]
 (assuming $k_2\geq k_1$) and from this to show  that
 \[
 \sum_{k_1+k_2\geq 3} D(k_1,k_2) \ll \frac{\log L}{L^2}
 \]
which is our claim.

 \subsubsection{The term $k_1=k_2=1$}
 We are left with showing
 that the first term on the RHS of \eqref{separate k1+k2 geq 3} tends to zero,
 and using Cauchy-Schwartz it suffices to bound the second moment
 \[
 \lim_{L\to \infty}     \EMP\left[ \left|   \frac 4{L^2}\sum_x H_L(x)^2 U_T(x,x) - \SigGOE(f) \right|^2 \right] = 0
 \]
for $T\geq 1$.  Squaring out, we want to show
 \begin{multline*}
  \EMP\left[ \left(  \frac 4{L^2} \sum_x H_L(x)^2U_T(x,x) \right)^2 \right]
  \\
   -2\SigGOE(f)   \EMP\left[ \frac 4{L^2}\sum_x H_L(x)^2U_T(x,x) \right]
  \to  (\SigGOE(f))^2   .
 \end{multline*}

We have
\begin{equation}\label{first sum is sigGOE}
 \EMP\left[\frac 4{L^2} \sum_x H_L(x)^2 U_T(x,x)\right] = \SigGOE(f) +O\left( \frac 1{TL} \right).
\end{equation}
Indeed, by Campbell's formula \eqref{campbell formula MP},
 \begin{multline*}
\EMP\left[\frac 4{L^2} \sum_x H_L(x)^2 U_T(x,x)\right]   =\frac 4{L^2} \int_0^\infty  H_L(x)^2 U_T(x,x) d\nu_{MP}(x)
\\
 =  \frac 4{L^2} \int_0^\infty   \frac{x^2\^f\left(\frac xL \right)}{\sinh^2(x/2)}  \frac 12\left(1+\^w(2Tx)-2\^w(Tx)^2 \right)2\frac{\sinh^2(x/2)}{x}dx
 \\
 = 4\int_0^\infty \^f(y)^2 y dy+  4\int_0^\infty \^f(y)^2 y \left( \^w(2TL y) - 2\^w(TLy)^2\right)dy ,
 \end{multline*}
using
\[
U_T(x,x) = \frac 12 + \frac 12 \^w(2Tx) -\^w(Tx)^2  .
\]

The first term is $2\intinf \^f(y)^2|y|dy = \SigGOE(f)$. The second term is treated by observing that since $\supp \^w\subseteq [-1,1]$, the integral is bounded by
\[
\int_0^\infty \^f(y)^2 y \left( |\^w(2TL y)| + 2|\^w(TLy)^2| \right) dy\ll \int_0^{1/2TL} \^f(y)^2 ydy \ll \frac 1{TL} ,
\]
hence we obtain \eqref{first sum is sigGOE}

Therefore it suffices to show
\[
 \EMP\left[ \left(  \frac 4{L^2}\sum_x H_L(x)^2 U_T(x,x)\right)^2 \right] \to  (\SigGOE(f))^2  .
 \]
 Again square out, obtain a diagonal sum and an off-diagonal sum.

 We bound the diagonal sum using Campbell's formula \eqref{campbell formula MP}
 \begin{multline*}
  \EMP\left[ \frac{16}{L^4} \sum_x H_L(x)^4  U_T(x,x)^2 \right] = \frac{16}{L^4}\int_0^\infty H_L(x)^4 U_T(x,x)^2
  d\nu_{MP}(x)
  \\
    =\frac{16}{L^4} \int_0^\infty \frac{x^4 \^f\left(\frac xL \right)^4}{\sinh^4(x/2)} \frac{2\sinh^2(x/2)}{x} U_T(x,x)^2  dx
  \\
   \ll   \frac 1{L^4} \int_0^{cL} \frac{ x^3}{\sinh^2(x/2)}dx \ll \frac 1{L^4} .
\end{multline*}

We evaluate the off-diagonal term using the correlation formula \eqref{eq:exp Camp gen} with $m=2$ for the Poisson process
  \begin{multline*}
  \EMP\left[\frac{16}{L^4} \sum_{x\neq y} H_L(x)^2  U_T(x,x)  H_L(y)^2  U_T(y,y) 2 \right]
  \\
  =\frac{16}{L^4}\iint H_L(x)^2 U_T(x,x)  H_L(y)^2U_T(y,y)     d\nu_{MP}(x)d\nu_{MP}(y)
 \\
  =
   \left( \frac 4{L^2}\int_0^\infty H_L(x)^2 U_T(x,x)  d\nu_{MP}(x) \right)^2 = \left( \SigGOE(f) \right)^2 + O\left(\frac 1{TL}\right)
  \end{multline*}
  by \eqref{first sum is sigGOE}, so we are done.

\section{Almost sure GOE fluctuations: Concluding the proof of Theorem \ref{thm:almost sure GOE}}

\begin{proof}

Let $\epsilon>0$ be given, and let us consider for a moment the random variables $\Vc_{T,L}=\Vc_{T,L}(X)$ and $\Vc_{T,L}^{\infty}$ for sufficiently large, but fixed $T,L$ within the allowed range $L=o(\log{T})$. It would follow that, as $g\rightarrow\infty$,
\begin{equation}
\label{eq:prob(|V-Sigma^2)>eps->}
\Prob\left( \left|\Vc_{T,L}- \Sigma_{GOE}^{2}(f)\right|>\epsilon \right)\rightarrow  \Prob\left( \left|\Vc_{T,L}^{\infty}- \Sigma_{GOE}^{2}(f)\right|>\epsilon \right),
\end{equation}
provided that $\Sigma_{GOE}^{2}(f)\pm \epsilon$ are not atoms of the distribution of $\Vc_{T,L}^{\infty}$. Otherwise, apply \eqref{eq:prob(|V-Sigma^2)>eps->} with
$$\epsilon/2<\epsilon'=\epsilon'(T,L)<\epsilon$$
so that, for the given $T,L$, the distribution of $\Vc_{T,L}^{\infty}$ does not have an atom at $\Sigma_{GOE}^{2}(f)\pm \epsilon'$, and use that, for sufficiently large admissible $T,L$, the probability of $$\left\{\left|\Vc_{T,L}^{\infty}- \Sigma_{GOE}^{2}(f)\right|>\epsilon'\right\} \subseteq \left\{\left|\Vc_{T,L}^{\infty}- \Sigma_{GOE}^{2}(f)\right|>\frac{\epsilon}{2}\right\}$$ is arbitrarily small, by \eqref{eq:VTL Pois close GOE}, a corollary from Theorem \ref{thm:VTL conv constant}.
Finally, the double limit
statement \eqref{eq:dev>eps -> 0 dbl lim} follows, as
\begin{equation*}
\begin{split}
&\Prob\left( \left|\Vc_{T,L}- \Sigma_{GOE}^{2}(f)\right|>\epsilon \right) \le
\Prob\left( \left|\Vc_{T,L}- \Sigma_{GOE}^{2}(f)\right|>\epsilon' \right) \\&=
\Prob\left( \left|\Vc_{T,L}^{\infty}- \Sigma_{GOE}^{2}(f)\right|>\epsilon' \right) \\&+  \left(\Prob\left( \left|\Vc_{T,L}- \Sigma_{GOE}^{2}(f)\right|>\epsilon' \right)  -\Prob\left( \left|\Vc_{T,L}^{\infty}- \Sigma_{GOE}^{2}(f)\right|>\epsilon' \right)\right)
\\&\le
\Prob\left( \left|\Vc_{T,L}^{\infty}- \Sigma_{GOE}^{2}(f)\right|>\frac{\epsilon}{2} \right) \\&+  \left(\Prob\left( \left|\Vc_{T,L}- \Sigma_{GOE}^{2}(f)\right|>\epsilon' \right)  -\Prob\left( \left|\Vc_{T,L}^{\infty}- \Sigma_{GOE}^{2}(f)\right|>\epsilon' \right)\right) ,
\end{split}
\end{equation*}
with the first term small by  \eqref{eq:VTL Pois close GOE}, and the difference tends to zero as $g\to \infty$ since $\Sigma_{GOE}^{2}(f)\pm \epsilon'$ is not an atom of $\Vc_{T,L}^{\infty}$.
\end{proof}

\end{document}